\theoremstyle{plain}
\newtheorem{thm}{\protect\theoremname}
 \theoremstyle{definition}
 \newtheorem*{defn*}{\protect\definitionname}
  \theoremstyle{plain}
  \newtheorem{lem}{\protect\lemmaname}
  \theoremstyle{remark}
  \newtheorem*{rem*}{\protect\remarkname}
  \theoremstyle{plain}
  \newtheorem*{conjecture*}{\protect\conjecturename}
\gdef\SetFigFontNFSS#1#2#3#4#5{} 
\DeclareMathOperator{\PLA}{PLA}
\DeclareMathOperator{\conv}{conv}
\newcommand{\eps}{\varepsilon}
\theoremstyle{plain}
\newtheorem*{thm3p}{Theorem \ref{thm:notMS}'}
\theoremstyle{remark}
\newtheorem*{qst*}{Question}
  \providecommand{\conjecturename}{Conjecture}
  \providecommand{\definitionname}{Definition}
  \providecommand{\lemmaname}{Lemma}
  \providecommand{\remarkname}{Remark}
\providecommand{\theoremname}{Theorem}
\begin{document}

\title{Perturbing PLA}

\author{Gady Kozma and Alexander Olevskii}

\thanks{Both authors partially supported by their respective Israel Science
Foundation grants.}
\begin{abstract}
We proved earlier that every measurable function on the circle, after
a uniformly small perturbation, can be written as a power series (i.e.\ a
series of exponentials with positive frequencies), which converges
almost everywhere. Here we show that this result is basically sharp:
the perturbation cannot be made smooth or even H\"older. We discuss
also a similar problem for perturbations with lacunary spectrum.

\end{abstract}
\maketitle

\section{Introduction}

\subsection{Functions representable by analytic sums}

\def\plahat#1{\stackrel{\,\mbox{\fontsize{4}{0}\selectfont \raisebox{-0.4ex}{PLA}}}{\widehat{#1}}\hspace{-0.7ex}}
Let a power series converge almost everywhere on the circle $\mathbb{T}$
to a function $g$:
\begin{equation}
g(t)=\sum_{n\ge0}c(n)e^{int}\label{eq:e1}
\end{equation}
It follows from the Privalov uniqueness theorem, that any $g$ may
have at most one such decomposition. An analogy with the classical
Riemannian theory suggests that $c(n)$ are the Fourier coefficients,
whenever $g$ is integrable.

Quite surprisingly, this is not the case: a few years ago we constructed
an $L^{2}$-function $g$ on $\mathbb{T}$ which admits the representation
(\ref{eq:e1}) but 
\[
\sum|c(n)|^{2}=\infty.
\]
Later we proved that such a function even can be smooth.

The space of functions $g$ which admit an ``analytic'' representation
(\ref{eq:e1}) we named PLA. The classic PLA-part of $L^{2}(\mathbb{T})$
is the set of functions whose Fourier ser\-ies contains exponentials
with non-negative frequencies only, namely the Hardy space $H^{2}$.
This set is ``small'', in paticular it is nowhere dense. In contrast
the ``non-classic'' part is dense. Moreover, the following equality
is true:
\begin{equation}
L^{0}=\PLA+C(\mathbb{T})\label{eq:e2}
\end{equation}
which means that every measurable finite function $f$ can be decomposed
as a sum 
\begin{equation}
f=g+h\label{eq:e3}
\end{equation}
where $g\in\PLA$ and $h$ is continuous. Further, one can replace
$C(\mathbb{T})$ with $U(\mathbb{T})$, the space of uniformly convergent
Fourier series, and one can require from $h$ to have arbitrarily
small norm (the norm in $U(\mathbb{T})$ being the supremum of the
modulus of the partial sums of the Fourier expansion). The described
results are proved in \cite{KO.06,KO.07}.

Our first result is that the equality (\ref{eq:e2}) is close to best
possible: one can not replace the second summand by a space of functions
which possess any smoothness, like H\"older or Sobolev one. We state
the result in the following form. Given a sequence $\omega=\{\omega(n)\}$,
$0<\omega(n)\nearrow\infty$, denote:
\begin{equation}
\mathcal{H}_{\omega}=\left\{ h:\sum|\widehat{h}(n)|^{2}\omega^{2}(n)<\infty\right\} .\label{eq:e4}
\end{equation}

\begin{thm}
\label{thm:PLA+Hw}For any $\omega$ the sum $\PLA+\mathcal{H}_{\omega}$
does not cover neither $L^{0}(\mathbb{T})$, nor even the Wiener algebra
$A(\mathbb{T})=\widehat{l_{1}(\mathbb{Z})}$.
\end{thm}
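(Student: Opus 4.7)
The plan is to exhibit, for each weight $\omega$, a concrete $f\in A(\mathbb{T})$ that is not expressible as $g+h$ with $g\in\PLA$ and $h\in\mathcal{H}_\omega$. Using $\omega(n)\nearrow\infty$, I first pick positive integers $n_1<n_2<\cdots$ so that $\omega(n_k)\ge 2^k$, and take
\[
  f(t)=\sum_{k\ge 1} 2^{-k}\,e^{-in_kt}.
\]
Then $\sum_k 2^{-k}<\infty$, so $f\in A(\mathbb{T})$, and $\widehat f$ is supported on the negative lacunary set $\{-n_k\}$ with $\widehat f(-n_k)=2^{-k}$. By design $\sum_k |\widehat f(-n_k)|^2\omega(n_k)^2\ge\sum_k 1=\infty$, so $f$ already lies outside $\mathcal{H}_\omega$; the point is to show that this excess cannot be absorbed into any PLA summand.

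Assume $f=g+h$ with $g\in\PLA$ and $h\in\mathcal{H}_\omega$. Since $\mathcal{H}_\omega,\,A\subset L^2$, one has $g\in L^2$. Let $g_-,\,h_-,\,f_-$ denote the $L^2$-projections onto the closed subspace $L^2_-$ of functions with only negative Fourier frequencies, so $g_-=f_--h_-\in L^2_-$. The key structural observation is that $g_-$ is itself a PLA function: writing $g=\sum_{n\ge 0} c(n)e^{int}$ a.e.\ and noting that the Fourier series of the analytic projection $g_+\in H^2$ converges in $L^2$ and a.e.\ (by Carleson), one has
\[
  g_-=g-g_+=\sum_{n\ge 0}b(n)e^{int}\quad\text{a.e.,}\qquad b(n):=c(n)-\widehat g(n).
\]
If $b\in\ell^2$ the last series would converge in $L^2$ to an element of $H^2$ and, by uniqueness of the a.e.\ limit, equal $g_-$; but $H^2\cap L^2_-=\{0\}$ would then force $g_-=0$. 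Hence we reach a dichotomy: either $g_-\equiv 0$, or $g_-$ is a ``wild'' PLA function with $b\notin\ell^2$.

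In the tame sub-case $g_-=0$, one gets $\widehat h(-n_k)=\widehat f(-n_k)=2^{-k}$ for every $k$, whence
\[
  \sum_k|\widehat h(-n_k)|^2\omega(n_k)^2\;\ge\;\sum_k 4^{-k}\cdot 4^k=\infty,
\]
contradicting $h\in\mathcal{H}_\omega$. The genuine difficulty -- and what I expect to be the main obstacle -- is the wild sub-case: one must rule out the possibility that a nonzero PLA function $g_-\in L^2_-$ matches the profile $\{2^{-k}\}$ closely in the $\omega$-weighted $\ell^2$ sense on $\{-n_k\}$. To dispose of this case I would either sharpen the structural analysis of wild PLA functions on lacunary spectra (exploiting the $L^2$-blow-up of their partial sums together with the Privalov-type uniqueness recalled in the introduction), or replace $f$ by a Rademacher randomization $\sum_k\varepsilon_k 2^{-k}e^{-in_kt}$ and apply a Khintchine-type estimate to exclude the wild scenario for almost every choice of signs.
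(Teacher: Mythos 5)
Your reduction to the ``wild sub-case'' is not a proof of the theorem --- it is a restatement of it. Nonzero PLA functions whose Fourier spectrum lies entirely in the negative frequencies do exist: take the $L^{2}$ function $g$ with representation (\ref{eq:e1}) but $\sum|c(n)|^{2}=\infty$ recalled in the introduction, and subtract its analytic Fourier projection; by exactly the Carleson argument you use, the difference is a PLA function lying in $L^{2}_{-}$, and it is nonzero whenever $g\notin H^{2}$. Worse, the earlier results $L^{0}=\PLA+C(\mathbb{T})$ (indeed $\PLA+U(\mathbb{T})$ with arbitrarily small perturbation) show that PLA is extraordinarily flexible, so no soft structural dichotomy, uniqueness principle, or Rademacher randomization with a Khintchine estimate can be expected to exclude the wild case: the obstruction must be quantitative. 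Your tame case ($g_{-}\equiv0$) is correct but trivial; everything the theorem asserts is concentrated in the case you leave open.

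The missing ingredient is a quantitative Privalov-type estimate such as lemma \ref{lem:main}: if a PLA function with vanishing constant term is within $\eps(K)$ of $-1$ in $L^{2}$, then its maximal partial-sum function exceeds $K$ on a set of measure bounded below. Once one has such a lemma, the counterexample $f$ cannot be chosen as casually as $\sum 2^{-k}e^{-in_{k}t}$: the coefficients $b(k)$ must decay fast \emph{relative to the function $\eps(\cdot)$} (condition (\ref{eq:e6}); with $\eps(K)\approx K^{-2}$ your geometric coefficients violate it, since the tail $2^{-k}$ is nowhere near $o(2^{-3k}k^{-2})$), and only \emph{afterwards} are the frequencies $n(k)$ chosen large enough to beat $\omega$ as in (\ref{eq:e7}). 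The point of this coupling is that, after normalizing at the $N$-th block, the hypothetical PLA summand $q$ produces a PLA function $g$ with $\left\Vert 1+g\right\Vert _{2}$ below the threshold of the lemma, forcing $q^{*}=\infty$ on a set of positive measure once one removes the bounded contributions of the absolutely convergent part of $f$ and the Carleson-controlled part of $h$ --- contradicting almost everywhere convergence of the PLA expansion of $q$. Without a lemma of this type, and without tying the construction of $f$ to it, your argument cannot close.
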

This theorem will be proved in \S\ref{sec:ProofPLA+Hw}.

\subsection{Menshov spectra revisited.~~~}

The classic Menshov representation theorem (1940), see \cite[\S XV.2]{B64}
says that every function $f\in L^{0}(\mathbb{T})$ admits representation
by a trigonometric series which converges a.e.:
\begin{equation}
f=\sum_{n=-\infty}^{\infty}c(n)e^{int}.\label{eq:2_1}
\end{equation}
This representation is non-unique, as follows from another remarkable
result of Menshov's proved much earlier (1916): there is a non-trivial
trigonometric series which converges to zero almost everywhere. Menshov's
construction for the representation reveals the non-uniqueness phenomenon
in a stronger form: one can avoid in (\ref{eq:2_1}) using any finite
and even some infinite sets of harmonics. This leads to the following
definition, see \cite{KO.01}:
\begin{defn*}
A sequence $\Lambda\subset\mathbb{Z}$ is called a Menshov spectrum
if every function $f\in L^{0}(\mathbb{T})$ can be decomposed to a
series (\ref{eq:2_1}) in which only frequencies from $\Lambda$ may
appear with non-zero amplitudes.
\end{defn*}
In this terminology, Menshov's theorem states that $\mathbb{Z}$ is
a Menshov spectrum. There are many results that show that Menshov
spectra could be quite sparse. For example Arutyunyan \cite{A85}
showed that any symmetric set which contains arbitrarily long intervals
is a Menshov spectrum. In other words, the set
\[
\bigcup_{n=1}^{\infty}[a_{n},a_{n}+n]\cup[-a_{n}-n,-a_{n}]
\]
is a Menshov spectrum, no matter how fast do the $a_{n}$ grow. Of
course, such sets can be extremely sparse. Here we wish to compare
to the following sparseness result, taken from \cite{KO.01}:\medskip{}

\noindent \emph{Given a sequence 
\begin{equation}
\omega(k)=o(1)\label{eq:2_2}
\end{equation}
one can construct a sequence $\lambda(k)\in\mathbb{Z}^{+}$ with $\lambda(k+1)/\lambda(k)>1+\omega(k)$,
such that $\Lambda=\{\pm l(k)\}$ is a Menshov spectrum.\medskip{}
}

The condition (\ref{eq:2_2}) is sharp: a Menshov spectrum cannot
be lacunary in Hada\-mard sense. Further, the symmetry condition
is also essential. Indeed, Privalov's uniqueness theorem implies that
the set $\mathbb{Z}^{+}$ is not a Menshov spectrum. See \cite{KO.01}
for details on all these claims.

One may now ask: how many negative frequencies one should add to $\mathbb{Z}^{+}$
in order to get a Menshov spectrum? According to the theorem above
an extra set with gaps of any sub-exponential growth could be sufficient.
Our second result is that this result is close to the best possible
one: super-exponential growth is not sufficient.
\begin{thm}
\label{thm:notMS}Let $Q:=\{q(k)\}\subset\mathbb{Z}^{+}$ satisfy
the condition 
\begin{equation}
\frac{q(k+1)}{q(k)}\to\infty.\label{eq:2_3}
\end{equation}
Then the set $\Lambda=\mathbb{Z}^{+}\cup\{-Q\}$ is not a Menshov
spectrum.
\end{thm}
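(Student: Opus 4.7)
I will argue by contradiction and reduce to Theorem~\ref{thm:PLA+Hw}. Suppose, for contradiction, that $\Lambda = \mathbb{Z}^+ \cup (-Q)$ is a Menshov spectrum. Then every $f \in L^0(\mathbb{T})$ admits a representation $\sum_{\lambda \in \Lambda} c(\lambda) e^{i\lambda t}$ whose partial sums, ordered by $|\lambda|$, converge to $f$ almost everywhere. Writing $S_N = g_N + h_{k(N)}$ with $g_N = \sum_{n=0}^N c(n) e^{int}$ and $h_k = \sum_{j \le k} c(-q(j)) e^{-iq(j)t}$ (and $k(N) = \max\{k : q(k) \le N\}$), the first task is to separate the two parts. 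On each plateau $N \in [q(k), q(k+1))$ the sum $h_{k(N)} = h_k$ is constant, and the plateau length $q(k+1) - q(k) \sim q(k+1)$ is enormous thanks to (\ref{eq:2_3}); combined with the classical equivalence between a.e.\ convergence and $\ell^2$-summability for lacunary trigonometric series, this should yield $c(-q(k)) \in \ell^2$, the a.e.\ convergence of $h_k$ to some $h \in L^2$ with $\widehat{h}$ supported on $-Q$, and in turn the a.e.\ convergence of $g_N$ to $g := f - h \in \PLA$.

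The technical core is then a weighted lemma: the super-lacunarity condition (\ref{eq:2_3}) provides a weight $\omega = \omega_Q \nearrow \infty$, depending only on $Q$, such that any negative part $h$ arising above from a $\Lambda$-representation of an $f \in A(\mathbb{T})$ automatically lies in $\mathcal{H}_\omega$. A natural ansatz is $\omega(q(k))^2 \asymp \log(q(k+1)/q(k))$, interpolated monotonically to a function on $\mathbb{Z}$. Once such an $\omega_Q$ is in hand, the proof concludes at once: Theorem~\ref{thm:PLA+Hw} applied with this $\omega$ furnishes some $f \in A(\mathbb{T}) \setminus (\PLA + \mathcal{H}_\omega)$, whereas its $\Lambda$-representation produces $f = g + h$ with $g \in \PLA$ and $h \in \mathcal{H}_\omega$---the desired contradiction.

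The main obstacle is the weighted lemma itself. Pure a.e.\ convergence of a super-lacunary series gives only $\ell^2$-summability of the coefficients with no additional rate, so the extra decay of the $c(-q(k))$ must be extracted from the fact that the lacunary series appears as the negative part of a \emph{full} $\Lambda$-series that converges a.e.\ to an $f \in A(\mathbb{T})$, coupled with the super-lacunarity of $Q$. In other words, the positive-side partial sums $g_N$---which are only known to lie in $\PLA$, with no prescribed smoothness---must nevertheless be unable to compensate for an irregular negative part, and quantifying this obstruction is where the substance of the proof will lie. I expect this step to require a perturbation argument similar in spirit to the construction underlying Theorem~\ref{thm:PLA+Hw} itself, but run in reverse: instead of building an $f \in A(\mathbb{T})$ outside $\PLA + \mathcal{H}_\omega$, one uses the structural rigidity of $A(\mathbb{T})$ (i.e.\ $\widehat{f} \in \ell^1$) together with the enormous gaps in $Q$ to \emph{force} the lacunary coefficients into a prescribed weighted $\ell^2$ class.
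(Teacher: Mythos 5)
Your opening reduction---splitting a hypothetical a.e.\ convergent $\Lambda$-expansion of $f$ into a positive part converging to some $g\in\PLA$ and a lacunary negative part converging to some $h\in L^{2}$ with spectrum in $-Q$---is sound and is essentially the paper's own reduction of theorem \ref{thm:notMS} to theorem \ref{thm:notMS}' (the paper does the splitting with Plessner's theorem rather than your plateau argument, but that is a minor point). The fatal problem is the ``weighted lemma'', which you correctly identify as the substance of your plan but which is in fact false as stated, not merely unproved. For \emph{any} unbounded weight $\omega$ there is an $h$ with $\supp\widehat{h}\subset-Q$, $\widehat{h}\in\ell^{1}$ (so $h\in A(\mathbb{T})$), and $h\notin\mathcal{H}_{\omega}$: choose indices $k_{1}<k_{2}<\cdots$ with $\omega(q(k_{j}))\ge 2^{j}$ and set $\widehat{h}(-q(k_{j}))=1/\omega(q(k_{j}))$, all other coefficients zero; then $\sum|\widehat{h}(n)|<\infty$ while $\sum|\widehat{h}(n)|^{2}\omega(n)^{2}=\sum_{j}1=\infty$. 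Taking $f=h$ itself gives an $f\in A(\mathbb{T})$ with a perfectly good $\Lambda$-representation (namely its own lacunary Fourier series, which converges a.e.) whose negative part lies outside $\mathcal{H}_{\omega}$. Thus no choice of $\omega_{Q}$, however adapted to $Q$, places $\mathcal{L}_{-Q}$ (or even $\mathcal{L}_{-Q}\cap A(\mathbb{T})$) inside $\mathcal{H}_{\omega_{Q}}$, and theorem \ref{thm:PLA+Hw} cannot be reached through coefficient decay. Restricting the lemma to the particular $f$ furnished by theorem \ref{thm:PLA+Hw} does not rescue the plan: for that $f$ the needed statement is precisely that no decomposition $f=g+h$ with $g\in\PLA$, $h\in\mathcal{L}_{-Q}$ exists, i.e.\ the full content of theorem \ref{thm:notMS}', so the argument becomes circular. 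Indeed, a.e.\ convergence of the lacunary part gives $\ell^{2}$ coefficients and nothing more, exactly as you feared; there is no hidden quantitative rigidity to extract.

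What the paper exploits instead is arithmetic, not quantitative, smallness of $\mathcal{L}_{-Q}$. By lemma \ref{lem:alpha}, the superexponential condition (\ref{eq:2_3}) yields a number $\beta$ with $e^{i\beta q(k)}\to1$. One then builds $f(t)=\sum d(N)e^{-i\nu(N)t}$ with rapidly decreasing $d(N)$ and with $\nu(N)$ chosen so that $|e^{i\beta\nu(N)}-1|>1$ while $|e^{i\beta q(k)}-1|$ is tiny for all $q(k)>\nu(N)$. Given a hypothetical a.e.\ convergent $\Lambda$-expansion of $f$, subtracting it from the Fourier expansion produces a null series, and the key step is to pass to the null series associated with $f(t+\beta)-f(t)$: multiplying the coefficients by $e^{in\beta}-1$ nearly annihilates all $-Q$ frequencies but keeps the $-\nu(N)$ terms of size comparable to $d(N)$. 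Shifting by $\nu(N)$, normalizing, and applying the Privalov-type lemma \ref{lem:main} then forces the positive part of the shifted series to have partial sums exceeding $N-C$ on a set of measure $\ge\tfrac12 c_{1}$ for every $N$, contradicting the a.e.\ convergence guaranteed by Plessner's theorem. This shift-by-$\beta$ device (or some other mechanism using the arithmetic structure of $Q$, not just its growth rate) is the missing idea in your proposal; a weight on the Fourier side cannot substitute for it.
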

Theorem \ref{thm:notMS} can be reformulated in the language of theorem
\ref{thm:PLA+Hw}. Let 
\[
\mathcal{L}_{Q}=\{f\in L^{1}:\widehat{f}(n)=0\;\forall n\not\in Q\}.
\]
Then 

\begin{thm3p}With the same $Q$ as in theorem \ref{thm:notMS}, $\PLA+\mathcal{L}_{-Q}\ne L^{0}$.\end{thm3p}

The equivalence of theorems \ref{thm:notMS} and \ref{thm:notMS}'
follows by taking the Menshov representation of $f$ and making the
positive part into a PLA function and the negative part into an $\mathcal{L}_{-Q}$
function. This requires Plessner's theorem and some standard facts
on lacunary trigonometric series --- we fill these details in \S\ref{sec:Proof-Lacunary}.

The formulation of theorem \ref{thm:notMS}' leads to a natural generalisation.
Can one find a function $f\not\in\PLA+\mathcal{L}_{-Q}$ for all superexponential
$Q$ simultaneously? We present a weakned version of this
\begin{thm}
\label{thm:PLA+UL}For a function $\ell(n)\to\infty$ there is a
function $f$ such that $f\not\in\PLA+\mathcal{L}_{-Q}$ for any $Q$
satisfying
\[
\frac{q(k+1)}{q(k)}>\ell(q(k)).
\]

\end{thm}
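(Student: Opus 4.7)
The natural approach is to upgrade Theorem~\ref{thm:notMS}' from a single--$Q$ statement to a uniform one by a multi-scale construction: a single $f$ whose ``failure of representability'' manifests at infinitely many frequency scales, collectively defeating every admissible $Q$.

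The first step is to fix a sequence $M_1<M_2<\cdots$ of integers with $M_{k+1}/M_k\to\infty$ but $M_{k+1}/M_k<\sqrt{\ell(M_k)}$ for all large $k$. Then, for any $Q$ satisfying the hypothesis of the theorem, the intersection $Q\cap[M_k,M_{k+1}]$ has at most one element: two consecutive $q(j),q(j+1)$ lying in $[M_k,M_{k+1}]$ would force $q(j+1)/q(j)>\ell(q(j))\ge\ell(M_k)$, contradicting the upper bound on $M_{k+1}/M_k$. Thus, on every such scale, a perturbation $h\in\mathcal{L}_{-Q}$ is permitted to alter at most one Fourier coefficient of $f$.

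With that observation I would build $f=\sum_k a_kf_k$, where each $f_k$ is a trigonometric polynomial with $\supp\widehat{f_k}\subset[-M_{k+1},-M_k]$ and where the scalars $a_k\searrow 0$ rapidly enough to ensure $f\in A(\mathbb T)$. Each $f_k$ is designed to be a \emph{robust local witness} in the sense of Theorem~\ref{thm:notMS}': for every $n_k\in[-M_{k+1},-M_k]$, the punctured polynomial $f_k-\widehat{f_k}(n_k)e^{in_kt}$ must still carry enough of the negative-spectrum obstruction that it is incompatible with being the scale-$k$ slice of a PLA function. Given a putative decomposition $f=g+h$ with $g\in\PLA$ and $h\in\mathcal{L}_{-Q}$, the identity $\widehat g=\widehat f-\widehat h$ shows that $\widehat g$ coincides with $a_k\widehat{f_k}$ on $[-M_{k+1},-M_k]$ except at (at most) one point; combining this with the robust-witness property of some scale $k$ forces the desired contradiction.

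The principal obstacle is the construction of the robust $f_k$: the proof of Theorem~\ref{thm:notMS}' yields a global obstruction, while here one needs a \emph{finitary} version that is stable under removal of any one Fourier coefficient in its range. I expect this to be achievable by a quantitative reworking of that proof (replacing density/averaging arguments by pigeonhole counts over the permitted perturbations), after which the remaining work---small-$a_k$ bookkeeping to ensure $f\in A(\mathbb T)$ and that the tails $\sum_{j\ne k}a_jf_j$ do not interfere with the scale-$k$ obstruction---is routine.
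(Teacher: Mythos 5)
There is a genuine gap, and it sits exactly where you yourself placed the ``principal obstacle'': the robust local witness $f_k$ does not exist in any form your scheme could use, and without it nothing else in the plan functions. Membership in $\PLA$ imposes no finite-window constraint on the negative Fourier coefficients of $g$: by the paper's own result (\ref{eq:e2}), in its strong form with a perturbation of arbitrarily small $U(\mathbb{T})$-norm, for every trigonometric polynomial $P$ with spectrum in $[-M_{k+1},-M_k]$ and every $\delta>0$ there is a PLA function whose Fourier coefficients on that window are within $\delta$ of those of $P$. Hence an obstruction of the kind you want (``this one-point-punctured coefficient pattern cannot occur as the window restriction of $\widehat{g}$ for $g\in\PLA$'') cannot be stable under small perturbations of the coefficients, and therefore cannot be produced by the quantitative machinery available here (lemma \ref{lem:main} and its relatives), which only distinguishes coefficients up to $L^2$-small errors. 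The real obstruction is inherently global: to apply the Privalov-type lemma \ref{lem:main} one needs the \emph{entire} tail of the negative spectrum beyond the distinguished frequency to be $L^2$-small relative to the distinguished coefficient, and in your setting that tail contains $\widehat{g}(-q)=-\widehat{h}(-q)$ for all $q\in Q$ beyond the current scale --- quantities that depend on the unknown $h\in\mathcal{L}_{-Q}$ and are not controlled by your $a_j$'s at all. So the step you call routine (``the tails $\sum_{j\ne k}a_jf_j$ do not interfere'') is precisely where the argument breaks; your bucket observation that $Q$ meets each $[M_k,M_{k+1}]$ in at most one point is correct but does not touch this difficulty.

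What is missing is a mechanism to suppress the uncontrolled coefficients supported on $-Q$, and this is the heart of the paper's proof. There, once the putative decomposition (hence $Q$) is fixed, lemma \ref{lem:alpha} together with the uniform estimate (\ref{eq:nomoremu}) produces a shift $\beta\in(\frac{2\pi}{3},\frac{4\pi}{3})$ with $|e^{i\beta q}-1|<C/\ell(q)$ for all $q\in Q$; passing to the null series of $f(t+\beta)-f(t)$ multiplies every coefficient by $e^{in\beta}-1$ and thereby crushes the unknown $-Q$ tail (estimate (\ref{eq:gammaneen})), at the price that $\beta$ is only known after $f$. To make $f$ independent of $\beta$ it is built from \emph{couples} of consecutive frequencies $-\nu(N)+1,-\nu(N)$ as in (\ref{eq:2_10}): since $\beta$ is bounded away from $0$ modulo $2\pi$, at least one member of each couple survives the multiplication, the two resulting cases being handled by the iterated quantity $\varepsilon\varepsilon_N$ of (\ref{eq:defepseps}); uniformity over all admissible $Q$ comes from tying $\nu(N)$ to $\ell$ via $\ell(\nu(N))>1/\varepsilon\varepsilon_N$. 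Neither this shift-and-couples device nor any substitute for it appears in your proposal, so the proof does not go through as planned.
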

We remark that, as in theorem \ref{thm:PLA+Hw}, the $f$ of theorem
\ref{thm:PLA+UL} may be taken to be in the Weiner algebra $A(\mathbb{T})$.

Theorem \ref{thm:PLA+UL} is clearly stronger than theorem \ref{thm:notMS}',
and hence also from theorem \ref{thm:notMS}. On the other hand, the
proof is also more technical. Hence we first prove theorem \ref{thm:notMS}
in \S\ref{sec:Proof-Lacunary}, and only afterwards give the proof
of theorem \ref{thm:PLA+UL} in \S\ref{sec:Proof-lac-simul}.

\section{Lemmas}

In this section we introduce some notation and lemmas which will be
used for the proof of all three theorems. For a PLA-function $g$
the coefficients $c_{n}$ in the expansion (\ref{eq:e1}) are unique,
so we will denote them by $\plahat{g}(n)$. Below we denote by $g$
any PLA-function with $\plahat{g}(0)=0$. We will use the following
notations\smallskip{}

\begin{minipage}[c][1\totalheight][t]{0.2\columnwidth}%
\begin{picture}(0,0)%
\includegraphics{priv3.pstex}%
\end{picture}%
\setlength{\unitlength}{3947sp}%
\begingroup\makeatletter\ifx\SetFigFont\undefined%
\gdef\SetFigFont#1#2#3#4#5{%
  \reset@font\fontsize{#1}{#2pt}%
  \fontfamily{#3}\fontseries{#4}\fontshape{#5}%
  \selectfont}%
\fi\endgroup%
\begin{picture}(1215,1174)(226,-711)
\put(1426,-169){\makebox(0,0)[lb]{\smash{{\SetFigFont{11}{13.2}{\rmdefault}{\mddefault}{\updefault}{\color[rgb]{0,0,0}$e^{it}$}%
}}}}
\put(526, 89){\makebox(0,0)[lb]{\smash{{\SetFigFont{11}{13.2}{\rmdefault}{\mddefault}{\updefault}{\color[rgb]{0,0,0}$Q_t$}%
}}}}
\end{picture}%
\end{minipage}\hfill{}%
\begin{minipage}[c][1\totalheight][t]{0.75\columnwidth}%
\begin{align}
g^{*}(t) & :=\sup_{N}\Big|\sum_{n<N}\plahat{g}(n)e^{int}\Big|\nonumber \\
G(z) & :=\sum\plahat{g}(n)z^{n},|z|<1\label{eq:defG}\\
Q_{t} & :=\conv(\{e^{it}\}\cup\{z:|z|<\tfrac{1}{2}\}).\nonumber 
\end{align}
\end{minipage}\medskip{}

\noindent This $Q_{t}$ is often called the Privalov ice-cream cone
at $e^{it}$. We always denote by $E$ a measurable subset of $\mathbb{T}$;
by $|E|$ its Lebesgue measure. By $\left\Vert \cdot\right\Vert _{2}$
we denote the norm in $L^{2}(\mathbb{T})$.
\begin{lem}
\label{lem:bounded cone}If $g^{*}(t)=A$ then $|G(z)|\le3A$ for
all $z\in Q_{t}$.\end{lem}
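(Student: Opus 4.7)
The natural approach is to express $G(z)$ as an Abel-summation transform of the partial sums at $e^{it}$, and then to bound the result using the geometry of the Privalov cone $Q_t$.

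First, I would reduce to the case $t=0$ by a rotation. Setting $u := ze^{-it}$, the condition $z \in Q_t$ becomes $u \in Q_0 := \conv(\{1\} \cup \{|w|<\tfrac12\})$, and
\[
G(z) \;=\; \sum_{n \ge 1} \plahat{g}(n)\, e^{int}\, u^n.
\]
Writing $d_n := \plahat{g}(n) e^{int}$ and $\sigma_N(t) := \sum_{n<N} d_n$, so that $|\sigma_N(t)|\le A$ for every $N$, an Abel summation of the partial sums $\sum_{n=1}^N d_n u^n$ gives
\[
\sum_{n=1}^{N} d_n u^n \;=\; \sigma_{N+1}(t) u^{N} \;+\; (1-u)\sum_{n=1}^{N} \sigma_n(t)\, u^{n-1}.
\]
Since $|u|<1$ and $|\sigma_{N+1}(t)| \le A$, the boundary term vanishes as $N\to\infty$, and we obtain the identity
\[
G(z) \;=\; (1-u)\sum_{n=1}^{\infty} \sigma_n(t)\, u^{n-1}, \qquad \text{so}\qquad |G(z)| \;\le\; A\cdot\frac{|1-u|}{1-|u|}.
\]

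It therefore suffices to prove the purely geometric claim that $|1-u|/(1-|u|) \le 3$ for every $u \in Q_0$. Parametrising $u = \alpha\cdot 1 + (1-\alpha)w$ with $\alpha \in [0,1]$ and $|w|\le\tfrac12$, I would estimate the numerator by
\[
|1-u| \;=\; (1-\alpha)|1-w| \;\le\; \tfrac32(1-\alpha),
\]
and the denominator by
\[
1 - |u| \;\ge\; 1 - \alpha - (1-\alpha)|w| \;\ge\; \tfrac12(1-\alpha).
\]
The ratio is therefore at most $3$, which is exactly the content of the lemma.

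\textbf{Main obstacle.} There is no serious obstacle: Abel summation is standard and the cone estimate is an elementary exercise in the triangle inequality. The only point that requires any care is keeping track of the fact that the quoted constant $3$ is dictated by the specific radius $\tfrac12$ used in the definition of $Q_t$ (a different radius would give a different constant with the same proof), and verifying that the boundary term $\sigma_{N+1}(t)u^N$ really does vanish at infinity, which it does because $|u|<1$ strictly for $u \in Q_0$ except at the single vertex $u=1$ (where $z=e^{it}$ and there is nothing to prove).
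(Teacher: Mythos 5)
Your proof is correct and follows essentially the same route as the paper: Abel summation to get $|G(z)|\le A\,|1-z|/(1-|z|)$, followed by the geometric bound of $3$ on the Privalov cone. The only difference is that you spell out the rotation to $t=0$, the vanishing of the boundary term, and the elementary cone estimate (which the paper simply asserts, noting the maximum at $z=-\tfrac12$), all of which check out.
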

\begin{proof}
Without loss of generality one may assume $t=0$. We now apply Abel's
summation formula to get, for any $|z|<1$,
\[
G(z)=\sum_{n=0}^{\infty}\plahat{g}(n)z^{n}=\sum_{n=0}^{\infty}(z^{n}-z^{n+1})\sum_{k=0}^{n}\plahat{g}(k)
\]
so
\[
|G(z)|\le A\sum_{n=0}^{\infty}|z^{n}-z^{n+1}|=\frac{A|1-z|}{1-|z|}
\]
but in $Q_{0}$ one has $|1-z|/(1-|z|)\le3$, with the maximum achieved
at $z=-\frac{1}{2}$ (the exact value of the constant 3 will play
no role in what follows).\end{proof}
\begin{lem}
\label{lem:main}There is some universal constant $c_{1}>0$ such
that for every $K>0$ there is a number $\eps=\eps(K)$ such that
if 
\[
\left\Vert 1+g\right\Vert _{2}<\eps
\]
then
\[
|\{t:g^{*}(t)>K\}|>c_{1}.
\]

\end{lem}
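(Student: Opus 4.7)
The plan is to apply a subharmonic mean-value argument on the sawtooth domain built from the Privalov cones over $E:=\{t:g^{*}(t)\le K\}$, exploiting the pointwise bound of Lemma~\ref{lem:bounded cone}. Set $H(z):=1+G(z)$, analytic in the open unit disk with $H(0)=1$. Lemma~\ref{lem:bounded cone} gives $|H|\le 3K+1$ on every cone $Q_{t}$ with $t\in E$, hence on the sawtooth $\Omega:=\bigcup_{t\in E}Q_{t}$. Since each $Q_{t}$ contains $\{|z|<\tfrac{1}{2}\}$ the origin lies in $\Omega$, which we assume nonempty (otherwise the conclusion is trivial). At a.e.\ $t\in E$ the partial sums of $g$ converge, so Abel summability yields $H(re^{it})\to g(t)+1$ as $r\to1^{-}$ for a.e.\ $t\in E$.

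Now $\log|H|$ is subharmonic on $\Omega$ and bounded above by $\log(3K+1)$. The mean-value inequality at $0$, written via the harmonic measure $d\omega$ on $\partial\Omega$ seen from the origin, reads
\[
0=\log|H(0)|\le \int_{E}\log|g+1|\,d\omega+\int_{\Gamma}\log|H|\,d\omega,
\]
where $\Gamma:=\partial\Omega\cap\{|z|<1\}$ is the interior boundary. The $\Gamma$-integral is bounded by $\log(3K+1)\,\omega(\Gamma)$. For the $E$-integral, concavity of $\log$ (Jensen on the probability $d\omega|_{E}/\omega(E)$) combined with the monotonicity estimate $d\omega|_{E}\le\tfrac{dt}{2\pi}$ (valid for any subdomain of the unit disk) yields
\[
\int_{E}\log|g+1|\,d\omega\le \omega(E)\log\!\left(\frac{\|1+g\|_{2}}{\omega(E)\sqrt{2\pi}}\right).
\]
Rearranging, $\|1+g\|_{2}\ge \omega(E)\sqrt{2\pi}\,(3K+1)^{-\omega(\Gamma)/\omega(E)}$, a positive lower bound depending on $K$ and on $\omega(E)$.

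The main obstacle is the geometric input: one must show $\omega(\Gamma;\Omega;0)\le\tfrac{1}{2}$ once $|E^{c}|\le c_{1}$ for a suitably small universal $c_{1}$. The domain $\Omega$ is Lipschitz (each $Q_{t}$ is an ice-cream cone of fixed aperture), and each component of $E^{c}$ of arclength $\ell$ contributes to $\Gamma$ a triangular cap bounded by two tangent segments of neighbouring cones, whose harmonic measure from the origin can be controlled by $C\ell$; summing, $\omega(\Gamma;\Omega;0)\le C|E^{c}|$. Then taking $c_{1}:=1/(2C)$ forces $\omega(E)\ge\tfrac{1}{2}$, so the above lower bound simplifies to $\|1+g\|_{2}\ge \sqrt{\pi/2}/(3K+1)$, which shows that $\eps(K):=\sqrt{\pi/2}/(3K+1)$ works with $c_{1}$ universal. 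Making the harmonic-measure bound entirely rigorous---either by a direct cap-by-cap Brownian-motion estimate or by invoking Dahlberg's theorem for Lipschitz domains---is the technical heart of the argument; the rest is the subharmonic-Jensen computation above.
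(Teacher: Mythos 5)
Your argument follows essentially the same route as the paper's proof: the subharmonic function $\log|1+G|$ on the sawtooth $\Omega=\bigcup_{t\in E}Q_{t}$ over $E=\{t:g^{*}(t)\le K\}$, the bound $\log(3K+1)$ on the inner boundary coming from Lemma \ref{lem:bounded cone}, the two-constants estimate via harmonic measure from the origin, and the geometric input $\omega(\Gamma)\le C|E^{c}|$ obtained gap-by-gap by a Brownian hitting estimate (this last step is exactly the one-line computation in the paper; it is routine for this fixed-aperture cone geometry, so it is not really the ``technical heart''). The only genuine deviation is how the hypothesis $\Vert 1+g\Vert_{2}<\varepsilon$ enters: the paper first applies Chebyshev to replace it by the pointwise bound $|1+g|<A\varepsilon$ off a set of measure $A^{-2}$ and then uses the plain two-constants inequality, whereas you keep the norm and use Jensen together with the domination $d\omega|_{E}\le dt/2\pi$ and Cauchy--Schwarz. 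Both work, and your version even gives the slightly better dependence $\varepsilon(K)\asymp 1/K$ rather than the paper's $\asymp 1/K^{2}$.

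There is, however, one unproved step in your write-up, and it is precisely the point the paper disposes of with its Egoroff sentence: the inequality $0=\log|H(0)|\le\int_{E}\log|1+g|\,d\omega+\int_{\Gamma}\log|H|\,d\omega$ is asserted with boundary data on $E$ given only by almost-everywhere radial (Abel) limits. The sub-mean-value/maximum principle needs $\limsup_{z\to e^{it},\,z\in\Omega}\log|H(z)|\le\log|1+g(t)|$ at ($\omega$-almost) every $t\in E$, and the approach to $e^{it}$ inside $\Omega$ goes through neighbouring cones $Q_{s}$, $s\ne t$, about which Abel's theorem says nothing; boundedness of $H$ on $\Omega$ alone only gives the useless bound $\log(3K+1)$ there. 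The standard repair, used in the paper, is to apply Egoroff: pass to a closed $E'\subset E$ with $|E\setminus E'|$ arbitrarily small on which the nontangential convergence $G(z)\to G(e^{it})$ is uniform; then $H$ extends continuously to the closure of the sawtooth over $E'$, the harmonic-measure inequality is legitimate there, and your Jensen bound (note $|1+g|\le K+1$ on $E'$, so the integral is not $+\infty$) and the estimate $\omega(\Gamma')\le C|(E')^{c}|$ go through unchanged. With that patch your proof is complete and correct.
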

We remark that in fact $c_{1}$ may be taken to be $\nicefrac{1}{2}$
or any number smaller than $1$, but this requires an extra argument
that we prefer to skip. The dependency between $\varepsilon$ and
$K$ will turn out to be $\varepsilon\approx\nicefrac{1}{K^{2}}$.
The power can be reduced arbitrarily close to zero (e.g.\ $\varepsilon\approx K^{-0.0001}$)
at the price of making $c_{1}$ smaller (we will not need all these
in this paper).
\begin{proof}
The proof is a simple variation on the proof of Privalov's uniqueness
theorem \cite[\S D.III]{K80}. Let $A>1$ be some sufficiently large
parameter to be fixed later, and let $E\subset[0,2\pi]$ be the set
of $t$ satisfying the following two requirements
\begin{gather}
\begin{aligned}|1+g(t)| & <A\eps &  & \forall t\in E\\
|g^{*}(t)| & \le K &  & \forall t\in E.
\end{aligned}
\label{eq:e5}
\end{gather}
Assume by contradiction that $|\{t:g^{*}(t)>K\}|\le c_{1}$. Then
we may assume that 
\[
|E|>1-\frac{1}{A^{2}}-c_{1}
\]
since Markov's inequality gives us 
\[
|\{t:|1+g(t)|\ge A\eps\}|<\frac{1}{A^{2}}.
\]
(the fact that the power is $2$ will play no role in the argument).

Next, recall that $G$ is the ``extension'' of $g$ into the disk
$\{|z|\le1\}$ defined by (\ref{eq:defG}) whenever the sum converges,
which is on all of $\{|z|<1\}$ and almost everywhere on $\{|z|=1\}$,
since $g$ is in PLA. By Abel's theorem, if $\sum\plahat{g}(n)e^{int}$
converges then $G(z)\to G(e^{it})$ when $z$ converges to $e^{it}$
non-tangentially \cite[\S 3.14]{Z68}. Assume therefore, without loss
of generality, that the convergence $G(z)\to G(e^{it})$ is uniform
on $E$ and that $E$ is closed (if it is not, use Egoroff's theorem
to find an $E'\subset E$ satisfying the requirement and having large
measure, $|E'|>1-A^{-2}-c_{1}$). 

Examine the Privalov domain over $E$, namely
\[
P=\bigcup_{t\in E}Q_{t}.
\]
See figure \ref{fig:Priv} which demonstrates a Privalov domain for
a Cantor set. By the above, $G$ is continuous on $P$. Next examine
the function $\ell:=\log|1+G|$. It is subharmonic on $P$, and continuous
on $\overline{P}$ (in the sense that allows the value $-\infty$).
Therefore
\begin{equation}
\ell(0)\le\int_{\partial P}\ell(z)d\Omega(z)\label{eq:subharmonic}
\end{equation}
where $\Omega$ is the harmonic measure of $P$ from $0$ (which is
clearly a point of $P$). For background on the harmonic measure (and
especially its construction using Brownian motion) see the book \cite{B95}.

To use (\ref{eq:subharmonic}) we first note that $G(0)=0$ so $\ell(0)=0$.
Now examine the boundary of $P$. We write $\partial P=E\cup I$.
On $E$ we have $\ell<\log A\eps$. On $I$ we apply lemma \ref{lem:bounded cone}
to see that $\ell\le\log3K$. Finally we need to estimate the harmonic
measure $\Omega$ of $I$ in the domain $P$. Every interval $J$
in the complement of $E$ corresponds to a piece $J'$ of $I$ ---
usually to just two straight lines from the edges of $e^{iJ}$, but
sometimes also to a piece of $\{|z|=\frac{1}{2}\}$. One $J$ and
$J'$ of the second kind (i.e.\ with a piece of $\{|z|=\frac{1}{2}\}$)
are noted in figure \ref{fig:Priv}. 
\begin{figure}
\input{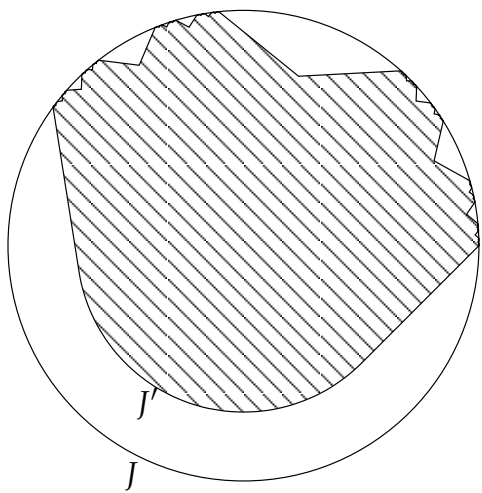}

\caption{A Privalov domain for a Cantor set.}
\label{fig:Priv}
\end{figure}
 Either way, a straightforward calculation shows that the probability
that Brownian motion starting from $0$ hits $J'$ before leaving
the disk is $\le C|J|$ and hence 
\[
\Omega(I)=\sum\Omega(J')\le C\sum|J|=C|E^{c}|<C(A^{-2}+c_{1}).
\]
Define therefore
\[
A=2C^{-1/2}\qquad c_{1}=\frac{1}{4C}
\]
and get that $\Omega(I)<\frac{1}{2}$ and hence that $\Omega(E)>\frac{1}{2}$.
With the estimates above for $\ell$ and (\ref{eq:subharmonic}) we
get 
\[
0=\ell(0)\le\Omega(E)\log A\eps+\Omega(I)\log3K<\frac{1}{2}\log A\eps+\log3K
\]
which leads to a contradiction if only $\epsilon$ is sufficiently
small. \end{proof}
\begin{rem*}
It might be worthwhile to compare this lemma to lemma 2.6 in \cite{N93},
which is also proved by Privalov's approach.
\end{rem*}

\section{\label{sec:ProofPLA+Hw}Proof of theorem \ref{thm:PLA+Hw}.}

Let $\{b(k)\}$ ($k=1,2,\dotsc$) be a fast decreasing sequence of
positive numbers, such that 
\begin{equation}
\sum_{j>k}b(j)=o\left(b(k)\eps\left(\frac{k}{b(k)}\right)\right)\qquad\forall k\label{eq:e6}
\end{equation}
where $\eps(K)$ are from lemma \ref{lem:main}.

Recall the sequence $\omega$ going to infinity from the statement
of the theorem. Given $\omega$, choose a fast increasing sequence
$\{n(k)\}$ of integers so that 
\begin{equation}
b(k)\eps\left(\frac{k}{b(k)}\right)\omega(n(k))\to\infty\label{eq:e7}
\end{equation}
Set
\[
f(t):=\sum_{k>0}b(k)e^{-in(k)t}.
\]
We claim that the function $f$ does not belong to $\PLA+\mathcal{H}_{\omega}$.
Take therefore any $h\in\mathcal{H}_{\omega}$ and let $q:=f-h$.
We need to show that $q\not\in\PLA$. Fix a (large) number $N$. Denote
\begin{align*}
f'(N;t) & =\sum_{k<N}b(k)e^{-in(k)t}\\
f''(N;t) & =\sum_{k>N}b(k)e^{-in(k)t}
\end{align*}
so 
\[
f=f'+b(N)e^{-in(N)t}+f".
\]
Similarly, let 
\[
h=h'+h"
\]
where 
\[
h':=\sum_{|n|<n(N)}\widehat{h}(n)e^{int}.
\]
Clearly 
\[
|f"(t)|\le\sum_{j>k}b(j)
\]
and
\begin{equation}
\left\Vert h"\right\Vert _{2}<\left\Vert h\right\Vert _{\mathcal{H}_{\omega}}/\omega(n(N)).\label{eq:hpp<hw}
\end{equation}

Examine now the function $g:=[1/b(N)](f'-h'-q)e^{in(N)t}$. It is
in PLA since $q\in\PLA$ (we argue by contradiction here) and $e^{in(N)t}(f'-h')$
is an analytic polynomial. In other words
\[
\plahat{g}(k)=\frac{1}{b(N)}\left(\widehat{f'}(k-n(N))-\widehat{h'}(k-n(N))-\plahat{q}(k-n(N))\right)
\]
and $\plahat{g}(0)=0$. Hence we may apply lemma \ref{lem:main}.
For the $L^{2}$ norm we can write
\begin{align*}
\left\Vert 1+g\right\Vert _{2} & =\left\Vert \frac{e^{in(N)t}}{b(N)}\left(b(N)e^{-in(N)t}+f'-h'-q\right)\right\Vert _{2}=\\
 & =\frac{1}{b(N)}\left\Vert -f''+h''\right\Vert _{2}\le\\
\mbox{By (\ref{eq:e6}) and (\ref{eq:hpp<hw})}\qquad & \le\frac{1}{b(N)}\left(o\left(b(N)\eps\left(\frac{N}{b(N)}\right)\right)+\frac{||h||_{\mathcal{H}_{\omega}}}{\omega(n(N))}\right)\\
\mbox{By (\ref{eq:e7})}\qquad & =o\left(\eps\left(\frac{N}{b(N)}\right)\right)
\end{align*}
(where the $o$ is allowed to depend on $||h||_{\mathcal{H}_{\omega}}$).
So for $N$ sufficiently large the $o$ is smaller than $1$, and
the lemma gives that 
\begin{equation}
\left|\left\{ t:g^{*}(t)>\frac{N}{b(N)}\right\} \right|>c_{1}.\label{eq:g*large}
\end{equation}

At this point we only need to go back from $g$ to $q$, so we need
to estimate the contributions of $f'$ and $h'$. $f'$ is straightforward
as
\begin{equation}
\sup_{k}\left|\sum_{j<k}\widehat{f'e^{in(N)t}}(j)e^{ijt}\right|\le\sum_{k}|\widehat{f}(k)|<C.\label{eq:fprime}
\end{equation}
For $h'$ we use Carleson's theorem \cite{C66,L04} for both $h^{+}$
and $h^{-}$ defined by 
\[
h^{+}=\sum_{n\ge0}\widehat{h}(n)e^{int}\qquad h^{-}=\sum_{n<0}\widehat{h}(n)e^{int}
\]
and get that both expansions converge almost everywhere. This gives
a set $E$ with $|E^{c}|\le\frac{1}{2}c_{1}$ such that
\begin{equation}
\left|\sum_{n=0}^{k}\widehat{h}(n)e^{int}\right|\le C\qquad\left|\sum_{n=-k}^{-1}\widehat{h}(n)e^{int}\right|\le C\qquad\forall t\in E,\forall k.\label{eq:Carleson}
\end{equation}
For $h'e^{in(N)t}$, the analogous sum is bounded by either a sum
of two terms from (\ref{eq:Carleson}), or by a difference of two,
and in both cases we get 
\begin{equation}
\left|\sum_{j<k}\widehat{h'e^{in(N)t}}(j)e^{ijt}\right|\le2C.\label{eq:hprime}
\end{equation}
This proves the theorem: since $g^{*}$ is large (\ref{eq:g*large})
and $f'$ and $h'$ are bounded (\ref{eq:fprime}), (\ref{eq:hprime}),
we get
\[
q^{*}\ge N-C
\]
on a set of measure $>\frac{1}{2}c_{1}$. Since $N$ was arbitrary
and $C$ depends only on $h$, this proves that $q\not\in\PLA$.\qed

 \begin{qst*}Does $\PLA+A(\mathbb{T})$ cover $C(\mathbb{T})$?
\end{qst*}

\section{\label{sec:Proof-Lacunary}Proof of theorem \ref{thm:notMS}}

Let us start by showing that theorem \ref{thm:notMS} is equivalent
to theorem \ref{thm:notMS}' (this will also aid in its proof). For
this we need two classical results
\begin{enumerate}
\item \noindent \emph{\label{enu:lacunary}A lacunary trigonometric sum
converges almost everywhere if and only if it is in $L^{2}$. A function
in $L^{1}$ with a lacunary Fourier expansion is in $L^{2}$. }See
e.g.\ \cite{Z68} \S 5.6. Here lacunary means in Hadamard sense,
i.e.\ $q(k+1)/q(k)>1+c$.
\item \emph{\label{enu:Plessner}If a trigonometric series converges pointwise
on a set $E$, then both its positive and negative parts converge
almost everywhere on $E$.} This result is due to Plessner \cite{P25}.
A careful treatment can be found in \cite{B64}, \S VIII.23, volume
2, page 151.
\end{enumerate}
To see that theorem \ref{thm:notMS} implies theorem \ref{thm:notMS}'
note that a function $f$ which proves that $\Lambda$ is not a Menshov
spectrum also cannot be in $\PLA+\mathcal{L}_{-Q}$ as a decomposition
$f=g+h$, $g\in\PLA$, $h\in\mathcal{L}_{-Q}$ carries over to a representation
\[
f(t)=\sum_{n=0}^{\infty}\plahat{g}(n)e^{int}+\sum_{n=-\infty}^{0}\widehat{h}(n)e^{int}
\]
which converges almost everywhere since both its parts converge almost
everywhere: the $g$ part by definition of PLA and the $h$ part because
of (\ref{enu:lacunary}).

Vice versa, assume by contradiction that $\Lambda$ is a Menshov
spectrum. Than every $f$ has a representation as a sum $\sum_{n\in\Lambda}c(n)e^{int}$
converging almost everywhere. But by Plessner's theorem the positive
part converges a.e., so its limit, call it $g$, is a PLA function.
Also the negative part converges a.e.\ so call its limit $h$. By
the other direction of (\ref{enu:lacunary}), $h\in L^{2}$ and hence
in $\mathcal{L}_{-Q}$. We get $f=g+h$ with $g\in\PLA$ and $h\in\mathcal{L}_{-Q}$
so, since $f$ was arbitrary, $\PLA+\mathcal{L}_{-Q}=L^{0}$. This
shows that theorem \ref{thm:notMS}' implies theorem \ref{thm:notMS},
so they are equivalent. \qed

Satisfied that theorem \ref{thm:notMS} and \ref{thm:notMS}' are
equivalent, we start their proof. The first step is the following
simple lemma.
\begin{lem}
\label{lem:alpha}Let Q satisfy (\ref{eq:2_3}). Then there is a number
$\alpha$, $\frac{1}{3}<\alpha<\frac{2}{3}$ such that
\begin{equation}
\{\alpha q(k)\}=o(1)\label{eq:2_4}
\end{equation}
where $\{x\}$ denotes the fractional part of $x$.
\end{lem}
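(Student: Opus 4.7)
The plan is a standard Cantor-style nested interval construction that exploits the super-exponential growth condition \eqref{eq:2_3} directly; there is no clever trick needed, only careful bookkeeping.

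First I would record the underlying geometric observation: for any positive integer $q$ and any $\varepsilon \in (0,1)$, the set
\[
S(q,\varepsilon) := \{\alpha \in \mathbb{R} : \{\alpha q\} < \varepsilon\}
\]
is $(1/q)$-periodic, consisting within each period of a single sub-interval of length $\varepsilon/q$. Consequently, any closed real interval of length at least $(1+\varepsilon)/q$ contains a closed subinterval of $S(q,\varepsilon)$ of length exactly $\varepsilon/q$.

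Next, I pick auxiliary numbers $\varepsilon_k \in (0,1)$ with $\varepsilon_k \to 0$ and $\varepsilon_k \ge 2q(k)/q(k+1)$ for all large $k$. By \eqref{eq:2_3} this is easy; for instance, $\varepsilon_k := \sqrt{q(k)/q(k+1)}$ tends to $0$ and, once $q(k+1) \ge 4q(k)$, satisfies $\varepsilon_k \ge 2q(k)/q(k+1)$. I then inductively build nested closed intervals $I_1 \supset I_2 \supset \dotsb$, with $I_1 \subset (1/3,2/3)$, satisfying
\[
I_k \subset S(q(k),\varepsilon_k) \qquad \text{and} \qquad |I_k| = \varepsilon_k/q(k).
\]
The step from $I_k$ to $I_{k+1}$ is permitted by the observation above, because
\[
|I_k| = \varepsilon_k/q(k) \ge 2/q(k+1) \ge (1+\varepsilon_{k+1})/q(k+1).
\]
For the very first values of $k$, where $q(k+1)/q(k)$ may not yet be large, one simply starts the construction at an index $k_0$ beyond which the growth is comfortable and picks $I_{k_0}$ to be any closed subinterval of $(1/3,2/3)$ of length exceeding $2/q(k_0+1)$; this is trivial since $q(k_0+1)$ is large.

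Finally, by Cantor's intersection theorem $\bigcap_k I_k \neq \emptyset$, and any $\alpha$ in this intersection lies in $(1/3,2/3)$ and satisfies $\{\alpha q(k)\} < \varepsilon_k$ for all large $k$, giving \eqref{eq:2_4}. The only mildly delicate point is balancing the rate at which $\varepsilon_k \to 0$ against $q(k)/q(k+1) \to 0$, and this is exactly the freedom that hypothesis \eqref{eq:2_3} supplies; there is no genuine obstacle.
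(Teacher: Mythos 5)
Your proof is correct, and it takes a mildly different route from the paper's. You run a nested closed-interval (Cantor intersection) argument: at each scale you choose inside the previous interval a component of the $1/q(k)$-periodic set where $\{\alpha q(k)\}<\varepsilon_k$, which works because $|I_{k-1}|\ge(1+\varepsilon_k)/q(k)$. The paper instead writes $\alpha$ down explicitly as $\tfrac13+\sum_k\gamma(k)/q(k)$, choosing $\gamma(n+1)=1-\{a(n)q(n+1)\}$ greedily so that each partial sum $a(n)$ satisfies $a(n)q(n)\in\mathbb{Z}$, and then bounds $\{\alpha q(n)\}\le q(n)\sum_{k>n}1/q(k)=o(1)$ using (\ref{eq:2_3}). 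The two arguments are the same scale-by-scale idea in different packaging, but the paper's explicit series has one concrete advantage: it immediately yields the uniform quantitative estimate (\ref{eq:nomoremu}), $\{\alpha q(k)\}\le C\max_{l\ge k}q(l)/q(l+1)$, which is what is actually needed later in \S\ref{sec:Proof-lac-simul}; your choice $\varepsilon_k=\sqrt{q(k)/q(k+1)}$ gives only the $o(1)$ of the lemma as stated, though your scheme recovers (\ref{eq:nomoremu}) too if you take $\varepsilon_k$ of the order $\max_{l\ge k}q(l)/q(l+1)$, which still satisfies $\varepsilon_k\ge 2q(k)/q(k+1)$ for large $k$. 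Two cosmetic points: the closed interval of length exactly $\varepsilon/q$ is not literally contained in $S(q,\varepsilon)$ as you defined it (its right endpoint has fractional part equal to $\varepsilon$), so either allow $\le\varepsilon_k$ or shrink slightly; and for the finitely many indices $k\le k_0$ you only control $\{\alpha q(k)\}$ by $1$, which is of course enough for an asymptotic statement, but worth saying if one wants the uniform bound for all $k$.
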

We remark that in \S \ref{sec:Proof-lac-simul} we will need that
the estimate of $\{\alpha q\}$ can be done uniformly in the superexponential
growth of the $Q$, namely,
\begin{equation}
\{\alpha q(k)\}\le C\max_{l\ge k}\left\{ \frac{q(l)}{q(l+1)}\right\} .\label{eq:nomoremu}
\end{equation}
Also the restriction $\alpha\in(\frac{1}{3},\frac{2}{3})$ is only
used in \S \ref{sec:Proof-lac-simul}, here $\alpha$ can be taken
anywhere in $(0,1)$.
\begin{proof}
We may assume without loss of generality that $q(k+1)/q(k)>2$ for
all $k$ (for $k=1$ we assume $q(1)>2$). Set
\begin{equation}
\alpha=\frac{1}{3}+\sum_{k=1}^{\infty}\frac{\gamma(k)}{q(k)}\label{eq:2_5}
\end{equation}
where the numbers $0<\gamma(k)\leq1$ are to be defined. Assuming
they are already defined for $k\le n$, we denote by $a(n)$ the $n^{\textrm{th}}$
partial sums of the series (\ref{eq:2_5}) and set $\gamma(n+1):=1-\{a(n)q(n+1)\}$
which implies that $q(n+1)a(n+1)$ is integer. Continuing this process
we get $\alpha$.

Now, for every $n>1$:
\[
\alpha q(n)=a(n)q(n)+q(n)\sum_{k>n}\frac{\gamma(k)}{q(k)}.
\]
As already explained, $a(n)q(n)$ is an integer. The second term is
$\le q(n)\sum_{k>n}\linebreak[4]1/q(k)$, which is $o(1)$ due to
(\ref{eq:2_3}). This gives (\ref{eq:2_4}) and also the  uniform
estimate (\ref{eq:nomoremu}) remarked upon after the lemma.
\end{proof}
\noindent \textbf{Step 1}. With the lemma proved we can start the
proof of theorem \ref{thm:notMS}. Fix numbers $d(n)>0$ decreasing
so fast that
\begin{equation}
\sum_{n>N}d^{2}(n)<\frac{d(N)^{2}}{N^{2}}\varepsilon^{2}\left(\frac{N}{d(N)}\right)\;,\label{eq:2_6-1}
\end{equation}
where the function $\varepsilon(n)$ was defined in lemma \ref{lem:main}.
Since this expression will repeat a lot, we will denote it for short
by $\varepsilon_{N}$, 
\begin{equation}
\varepsilon_{N}:=\frac{d(N)}{N}\varepsilon\left(\frac{N}{d(N)}\right)\;,\label{eq:defepseps-1}
\end{equation}
so $\sum_{n>N}d^{2}(n)<\varepsilon_{N}^{2}$. \medskip{}

\noindent \textbf{Step 2}. Next use lemma \ref{lem:alpha} to find
a number $\beta\in(0,2\pi)$ such that
\begin{equation}
e^{i\beta q(k)}\to1.\label{eq:orig-7}
\end{equation}

\noindent \textbf{Step 3}. With $\beta$ defined, one can find $\nu(N)$
such that the following two properties hold,
\begin{align}
|1-e^{i\beta q(k)}| & <\varepsilon_{N}\qquad\forall k\mbox{ such that }q(k)>\nu(N)\label{eq:orig-8}\\
|1-e^{i\beta\nu(N)}| & >1.\label{eq:orig-9}
\end{align}
These properties can be satisfied simultaneously because (\ref{eq:orig-8})
is satisfied whenever $\nu(N)$ is sufficiently large, while (\ref{eq:orig-9})
is satisfied on a sequence converging to $\infty$. We now define
\begin{equation}
f(t)=\sum_{N=1}^{\infty}d(N)e^{-i\nu(N)t}.\label{eq:orig-10}
\end{equation}
This is the required function. As an aside we remark that it is in
the Wiener algebra, but it might be highly non-smooth as we have no
control over the relation between $d(N)$ and $\nu(N)$.\medskip{}

\noindent \textbf{Step 4}. Recall now the discussion in the beginning
of this section. We claim that $f$ is a function demonstrating that
$\Lambda=\{-Q\}\cup\mathbb{Z}^{+}$ is not a Menshov spectrum, i.e.\ that
$f$ has no expansion
\begin{equation}
f(t)=\sum_{n}c(n)e^{int}\qquad n\not\in\Lambda\implies c(n)=0\label{eq:thm2pp}
\end{equation}
which converges almost everywhere. Assume therefore by contradiction
that an expansion (\ref{eq:thm2pp}) exists. Due to Plessner's theorem
we know that $\sum_{n<0}c(n)e^{int}$ converges (to some value), and
since the negative part is lacunary we must have $\sum_{n<0}|c(n)|^{2}<\infty$.

Somewhat similarly to the proof of theorem \ref{thm:PLA+Hw}, we will
now subtract the Fourier expansion of $f$ and the non-standard one
(\ref{eq:thm2pp}) and get a null series i.e.\ a trigonometric series
converging to zero almost everywhere. Namely, define
\[
\gamma(n)=c(n)-\widehat{f}(n)=c(n)-\begin{cases}
d(N) & n=-\nu(N)\\
0 & \mbox{otherwise}
\end{cases}
\]
and get that 
\[
\sum_{n=-\infty}^{\infty}\gamma(n)e^{int}=0\quad\mbox{for almost every }t.
\]
The crucial step is to examine $f(t+\beta)-f(t)$ and the corresponding
null series
\begin{equation}
\sum_{n=-\infty}^{\infty}\gamma(n)(e^{in\beta}-1)e^{int}=0\quad\mbox{for almost every }t.\label{eq:nullbeta-1}
\end{equation}
As in the remark after (\ref{eq:thm2pp}), the positive and negative
parts of (\ref{eq:nullbeta-1}) converge almost everywhere (not necessarily
to zero).

\medskip{}

\noindent \textbf{Step 5}. We will need some estimates for the $L^{2}$
norm of the ``tails'' of (\ref{eq:nullbeta-1}) so let us state
them now: for every $N$, 
\begin{equation}
\sum_{n<-\nu(N)}|\gamma(n)(e^{i\beta n}-1)|^{2}\le C\varepsilon_{N}^{2}.\label{eq:gamman}
\end{equation}
Here and below $C$ may depend on $\sum_{n<0}|c(n)|^{2}$ (but not
on $N$).
\begin{proof}
[Proof of (\ref{eq:gamman})]$\gamma(n)$ can be non-zero only if
$n=-q(k)$ or if $n=-\nu(k)$. In the first case we have
\[
|e^{i\beta n}-1|=|e^{-i\beta q(k)}-1|=|e^{i\beta q(k)}-1|\stackrel{(\textrm{\ref{eq:orig-8}})}{<}\varepsilon_{N}
\]
(recall that we are looking at $n<-\nu(N)$ so (\ref{eq:orig-8})
applies). All in all this gives
\[
\sum_{k:q(k)>\nu(N)}|c(-q(k))(e^{i\beta q(k)}-1)|^{2}<\varepsilon_{N}^{2}\sum_{n<0}|c(n)|^{2}
\]
which we agreed to denote by $C\varepsilon_{N}^{2}$. The second kind
of non-zero $n$ is $-\nu(k)$ and for this we simply use the definition
of the $d(k)$, (\ref{eq:2_6-1}) and of $f$, (\ref{eq:orig-10}),
and get
\[
\sum_{k>N}d(k)^{2}|e^{i\beta\nu(k)}-1|^{2}\le4\sum_{k>N}d(k)^{2}\stackrel{(\textrm{\ref{eq:2_6-1}})}{\le}4\varepsilon_{N}^{2}.
\]
Taking these two estimates together gives
\begin{align*}
\sum_{n<-\nu(N)}|\gamma(n)(e^{i\beta n}-1)|^{2} & =\sum_{n<-\nu(N)}|(c(n)+\widehat{f}(n))(e^{i\beta n}-1)|^{2}\le\\
 & \le\sum_{n<-\nu(N)}(2|c(n)|^{2}+2|\widehat{f}(n)|^{2})\cdot|e^{i\beta n}-1|^{2}\\
\mbox{By the above}\qquad & \le\varepsilon_{N}^{2}\cdot\left(2C+8\right)
\end{align*}
as needed.
\end{proof}
\noindent \textbf{Step 6}. We now proceed as in the proof of theorem
\ref{thm:PLA+Hw} i.e.\ we wish to apply lemma \ref{lem:main} for
some PLA function related to the null-series (\ref{eq:nullbeta-1}).
We shift the null-series (\ref{eq:nullbeta-1}) by $\nu(N)$ and divide
it by $d(N)(e^{-i\nu(N)\beta}-1)$. We get
\begin{align}
q(t): & =\frac{1}{d(N)(e^{-i\nu(N)\beta}-1)}\sum_{n>-\nu(N)}\gamma(n)(e^{i\beta n}-1)e^{i(n+\nu(N))t}\label{eq:simple-qhat}\\
 & =1+\frac{1}{d(N)(e^{-i\nu(N)\beta}-1)}\sum_{n<-\nu(N)}\gamma(n)(e^{i\beta n}-1)e^{i(n+\nu(N))t}.\nonumber 
\end{align}
In other words, the first line is the ``PLA expansion'' of $q$
and the second is the Fourier expansion. In particular $q$ is a PLA
function with $\plahat{q}(0)=0$. By (\ref{eq:gamman}), 
\[
||1-q||_{2}\le\frac{C\varepsilon_{N}}{d(N)|e^{-i\nu(N)\beta}-1|}.
\]
By requirement (\ref{eq:orig-9}), $|e^{-i\nu(N)\beta}-1|>1$, and
with the definition of $\varepsilon_{N}$ we get
\[
||1-q||_{2}\le\frac{C}{N}\varepsilon\left(\frac{N}{d(N)}\right).
\]
Hence for $N>C$ we may apply lemma \ref{lem:main} (to $-q$, but
$q^{*}=(-q)^{*}$) and get
\[
\left|\left\{ t:q^{*}(t)>\frac{N}{d(N)}\right\} \right|>c_{1}.
\]
Recalling that the PLA expansion of $q$ is (\ref{eq:simple-qhat})
we get a set of measure $>c_{1}$ where 
\begin{equation}
\sup_{k}\left|\sum_{n=-\nu(N)}^{k}\gamma(n)(e^{i\beta n}-1)e^{i(n+\nu(N))t}\right|>d(N)|e^{-i\nu(N)\beta}-1|\frac{N}{d(N)}>N.\label{eq:sumnuNk}
\end{equation}

\noindent \textbf{\medskip{}
Step 7}. We only need to change the lower bound in the sum. But clearly
\[
\sum_{n=-\nu(N)}^{0}|\gamma(n)(e^{i\beta n}-1)|^{2}\le C
\]
(if you want you can deduce this from (\ref{eq:gamman}) with the
$N$ there being 0). Using Markov's inequality gives that the corresponding
function cannot be large on a set of large measure: 
\[
\left|\left\{ t:\Bigg|\smash{\sum_{n=-\nu(N)}^{0}}\gamma(n)(e^{i\beta n}-1)e^{int}>C\Bigg|\right\} \right|<\frac{1}{2}c_{1}.
\]
We subtract this from (\ref{eq:sumnuNk}) and get a set of measure
$>\frac{1}{2}c_{1}$ where
\[
\sup_{k}\left|\sum_{n=0}^{k}\gamma(n)(e^{i\beta n}-1)e^{int}\right|>N-C.
\]
Since $N$ was arbitrary, we get a set of measure $>\frac{1}{2}c_{1}$
where
\[
\sup_{k}\left|\sum_{n=0}^{k}\gamma(n)(e^{i\beta n}-1)e^{int}\right|=\infty.
\]
But this is exactly the positive part of (\ref{eq:nullbeta-1}). This
is a contradiction since it was supposed to converge almost everywhere.
This finishes the proof of theorem \ref{thm:notMS}.\qed
\begin{conjecture*}
Probably theorem \ref{thm:notMS}, and perhaps even theorem \ref{thm:PLA+UL},
hold for $Q$ lacunary in Hadamard sense.
\end{conjecture*}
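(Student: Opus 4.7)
The strategy I would attempt is to run the argument of Theorem~\ref{thm:notMS} with Hadamard-lacunary $Q$ in place of super-exponentially lacunary $Q$, keeping Privalov Lemma~\ref{lem:main}, the rapidly decreasing coefficients $d(N)$, and the target function $f=\sum_N d(N)e^{-i\nu(N)t}$ formally unchanged. First I would assume toward contradiction that $\Lambda=\mathbb{Z}^+\cup(-Q)$ is a Menshov spectrum, invoke Plessner together with fact~(\ref{enu:lacunary}) to split the (hypothetical) Menshov expansion of $f$ into a $\PLA$-summand $g$ and an $\mathcal{L}_{-Q}$-summand $h\in L^{2}$, and produce the null series (\ref{eq:nullbeta-1}) by a suitable linear transformation.

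The sole place where super-exponentiality was used is Lemma~\ref{lem:alpha}, which supplied a $\beta$ with $|e^{i\beta q(k)}-1|<\varepsilon_{N}$ uniformly for $q(k)>\nu(N)$. For Hadamard-lacunary $Q$ this cannot be salvaged by a single $\beta$: by Weyl's equidistribution theorem $\{\alpha q(k)\}$ is uniformly distributed mod $1$ for a.e.\ $\alpha$, so no choice of shift makes $e^{i\beta q(k)}\to 1$. Consequently the $L^{2}$-tail estimate (\ref{eq:gamman}) collapses, and the Privalov application in Step~6 cannot be closed as written.

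My tentative replacement is to pass from a single shift to a finite linear combination of shifts, or more generally to a trigonometric multiplier. Concretely, for each large $N$ one would seek weights $a_{1},\ldots,a_{M}\in\mathbb{C}$ and shifts $\beta_{1},\ldots,\beta_{M}\in\mathbb{T}$ with $\sum|a_{j}|=O(1)$ such that the symbol $m(n):=\sum_{j}a_{j}(e^{i\beta_{j}n}-1)$ satisfies $|m(-q(k))|<\varepsilon_{N}$ for every $q(k)>\nu(N)$ while $|m(-\nu(N))|\gtrsim 1$. Applying the multi-shift operator $f\mapsto\sum_{j}a_{j}(f(\,\cdot+\beta_{j})-f)$ to the null series would then replace Steps~5--7 almost verbatim, producing a PLA function whose $L^{2}$-distance to a constant is $O(\varepsilon(N/d(N)))$, and yielding the required contradiction via Lemma~\ref{lem:main}.

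The hard part is constructing such an $m$ with uniform quantitative control for arbitrary Hadamard-lacunary $Q$. The existence of \emph{some} $m$ small on $-Q$ and nonzero at a single chosen $-\nu(N)\notin Q$ follows from a Hahn--Banach separation, which is natural given that Hadamard-lacunary sets are Sidon; but matching the extremely tight budget $\varepsilon_{N}\approx(d(N)/N)^{3}$ fixed in Step~1, with $\sum|a_{j}|$ bounded independently of $N$, is precisely where the super-exponential proof has no analogue. This is the main obstacle, and arguably the reason the authors left the statement as a conjecture; one may well need to abandon shifts altogether and replace them with Riesz products dissociated with respect to $Q$, or with a Menshov-style probabilistic construction of $m$, to obtain the required quantitative decay.
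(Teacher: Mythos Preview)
The statement is labelled a conjecture, and the paper offers no proof --- only the comment that Theorem~\ref{thm:notMS} ``probably'' extends to Hadamard-lacunary $Q$. There is therefore nothing to compare your attempt against; what you have written is, appropriately, not a proof but a discussion of where the existing argument breaks and how one might try to repair it. Your diagnosis that Lemma~\ref{lem:alpha} is the single point of failure is correct, and the multi-shift/multiplier replacement you sketch is a natural line to pursue; your own assessment that the quantitative control on $m$ is the real obstacle is consistent with the paper's decision to leave the statement open.

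One claim deserves correction, though. From the a.e.\ equidistribution of $\{\alpha q(k)\}$ you jump to ``no choice of shift makes $e^{i\beta q(k)}\to 1$'', but an a.e.\ statement cannot rule out exceptional $\beta$. Indeed, for $q(k)=2^{k}$ the shift $\beta=2\pi\cdot 2^{-m}$ gives $e^{i\beta q(k)}=1$ for all $k\ge m$, and the paper exploits exactly this in the paragraph immediately following the conjecture to show that $\{-2^{n}\}\cup\mathbb{Z}^{+}$ is a Privalov spectrum, hence in particular not a Menshov spectrum. So the conjecture is already settled for this specific Hadamard-lacunary $Q$ by a single shift; it is only for Hadamard-lacunary sequences \emph{without} such arithmetic structure that a genuinely new mechanism (of the multiplier or Riesz-product type you suggest) would be needed.
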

There is another version of this problem. Let us introduce the concept
of ``Privalov spectrum''. We say that $\Lambda$ is a Privalov spectrum
if 
\[
\sum_{n\in\Lambda}c(n)e^{int}=0\qquad\forall t\in E,\quad|E|>0\implies c(n)\equiv0
\]
Clearly, a Menshov spectrum can never be a Privalov spectrum. The
trick of shifting by $\beta$ employed above is useful also for this
problem. For example, $\Lambda=\{-2^{n}\}_{n=1}^{\infty}\cup\mathbb{Z}^{+}$
is a Privalov spectrum. To see this, it is enough to shift by $\beta=2^{-k}$
with $k$ sufficiently large so as to satisfy $E\cap(E+\beta)\ne\emptyset$,
and this reduces the result to the original Privalov theorem. 

Thus a natural variation on the conjecture is: how sparse must $Q$
be in order to ensure that $-Q\cup\mathbb{Z}^{+}$ is a Privalov spectrum?
This problem was considered by F. Nazarov in the early 90s in an unpublished
work (private communication). The trick of shifting can be used to
show that if $Q$ is very fast increasing, then $-Q\cup\mathbb{Z}^{+}$
is Privalov, but it seems that not under the condition (\ref{eq:2_3})
of superexponential growth. Faster growth of $Q$ is necessary.

Another interesting generalization is to ask whether removing a superexponential
sequence from a Menshov spectrum leaves one with a Menshov spectrum.
Let us remark that a theorem of Talalyan \cite{T69} shows that removing
a single element from a Menshov spectrum will always result in a new
Menshov spectrum.

\section{\label{sec:Proof-lac-simul}Proof of theorem \ref{thm:PLA+UL}}

The $f$ demonstrating theorem \ref{thm:PLA+UL} cannot be exactly
as in the proof of theorem \ref{thm:notMS}, as that $f$ was lacunary
itself! Hence it is itself in some $\mathcal{L}_{-Q}$, without the
need to add any PLA function. It turns out that one can construct
an $f$ demonstrating theorem \ref{thm:PLA+UL} and very close to
lacunary. We will construct an $f\not\in\PLA+\mathcal{L}_{-Q}$ for
any $Q$ which is a sum of extremely lacunary \emph{couples} of consecutive
harmonics. The role of $\beta$ (the value you shift by in the proof)
in the theorem also changes --- it has to be chosen after $f$ is
already known, so $f$ cannot depend on it. \medskip{}

\noindent \textbf{Step 1}. To start the proof of theorem \ref{thm:PLA+UL},
we fix numbers $d(n)>0$ decreasing very fast. The precise condition
will not make much sense now, so please do not dwell on it: it will
become clearer in later stages of the proof. Precisely we define 
\begin{equation}
\varepsilon\varepsilon_{N}:=\frac{d(N)^{2}}{N^{2}}\varepsilon\left(\frac{N}{d(N)}\right)\varepsilon\left(\frac{N^{2}}{d(N)^{2}}\varepsilon^{-1}\left(\frac{N}{d(N)}\right)\right)\;,\label{eq:defepseps}
\end{equation}

\noindent and then require $d(n)$ to satisfy
\begin{equation}
\sum_{n>N}d^{2}(n)<\varepsilon\varepsilon_{N}^{2}.\label{eq:2_6}
\end{equation}
Comparing to (\ref{eq:2_6-1}) we see that instead of using the function
$\varepsilon(n)$ from lemma \ref{lem:main} once, as we did in (\ref{eq:2_6-1}),
here we need to iterate it. This is the reason for the notation $\varepsilon\varepsilon_{N}$.\medskip{}

\noindent \textbf{Step 2}. The choice of $\nu$ now cannot depend
on $\beta$ as it is not yet known --- it will instead depend on $\ell$,
the rate at which $q(k+1)/q(k)$ goes to infinity. Precisely, for
every $N$ find a $\nu(N)$ such that
\begin{equation}
\ell(\nu(N))>\frac{1}{\varepsilon\varepsilon_{N}}\label{eq:defnun}
\end{equation}
where  $\ell$ is from the statement of theorem \ref{thm:PLA+UL}.
We assume at this point that $\ell$ is increasing, which we may,
without loss of generality.\medskip{}

\noindent \textbf{Step 3}. With these we may define our function $f$,
\begin{equation}
f(t):=\sum_{n=1}^{\infty}d(n)\left[e^{-i(\nu(n)-1)t}+e^{-i\nu(n)t}\right].\label{eq:2_10}
\end{equation}

\noindent \textbf{Step 4}. We now need to show that $f\not\in\PLA+\mathcal{L}_{-Q}$,
for any $Q$. Assume to the contrary that $f=g+h$ with $g\in\PLA$
and $h\in\mathcal{L}_{-Q}$ for some $Q$ with $q(k+1)/q(k)>\ell(q(k))$.
As in the proof of theorem \ref{thm:notMS} we denote by $c(n)$ the
coefficients of this ``non-standard expansion'' of $f$, i.e.\ $c(n)=\plahat{g}(n)$
for $n\ge0$ and $c(n)=\widehat{h}(n)$ for $n<0$. Again we get a
null series by subtracting the Fourier expansion of $f$ and the non-standard
one. Namely, define
\[
\gamma(n)=c(n)-\begin{cases}
d(k) & n=-\nu(k)+1\mbox{ or }n=-\nu(k)\\
0 & \mbox{otherwise}
\end{cases}
\]
and get that 
\[
\sum_{n=-\infty}^{\infty}\gamma(n)e^{int}=0\quad\mbox{for almost every }t.
\]
Next apply lemma \ref{lem:alpha} (and the remark following it) to
find a number $\beta\in(\frac{2\pi}{3},\frac{4\pi}{3})$ such that
\begin{equation}
|e^{i\beta q(k)}-1|<\frac{C}{\ell(q(k))}\quad\forall k.\label{eq:2_7}
\end{equation}
(the $C$ has two sources: the first is (\ref{eq:nomoremu}) and the
second is the inequality $|e^{2\pi it}-1|\le C\{t\}$). As before,
the crucial step is to examine $f(t+\beta)-f(t)$ and the corresponding
null series
\begin{equation}
\sum_{n=-\infty}^{\infty}\gamma(n)(e^{in\beta}-1)e^{int}=0\quad\mbox{for almost every }t.\label{eq:nullbeta}
\end{equation}
Again this series not only converges to 0 symmetrically, also its
positive part $\sum_{n=0}^{\infty}$ and its negative part converge
individually, almost everywhere, for the same reasons as before. \medskip{}

\noindent \textbf{Step 5}. We will need estimates for the $L^{2}$
norm of the tails of (\ref{eq:nullbeta}), analogous to those of (\ref{eq:gamman}).
Precisely, 
\begin{equation}
\sum_{n<-\nu(N)}|\gamma(n)(e^{i\beta n}-1)|^{2}\le C\varepsilon\varepsilon_{N}^{2}.\label{eq:gammaneen}
\end{equation}
The proof is practically the same as that of (\ref{eq:gamman}), but
we include it for the convenience of the reader.
\begin{proof}
$c(n)$ can be non-zero only if $n=-q(k)$ or if $n=-\nu(k)+1$ or
$-\nu(k)$. In the first case we have
\[
|e^{i\beta n}-1|=|e^{-i\beta q(k)}-1|=|e^{i\beta q(k)}-1|\stackrel{(\textrm{\ref{eq:2_7}})}{<}\frac{C}{\ell(q(k))}.
\]
Now, we are looking at $n<-\nu(N)$ so by the definition of $\nu(N)$,
(\ref{eq:defnun}), 
\[
\ell(q(k))\ge\ell(\nu(N))\stackrel{\textrm{(\ref{eq:defnun})}}{<}\frac{1}{\varepsilon\varepsilon_{N}}.
\]
All in all this gives
\[
\sum_{l:q(l)>\nu(k)}|c(-q(l))(e^{i\beta q(l)}-1)|^{2}<C\varepsilon\varepsilon_{N}^{2}.
\]
The second kind of non-zero $n$ is $-\nu(k)+1$ and $-\nu(k)$ and
for this we simply use the definition of the $d(k)$, (\ref{eq:2_6})
and of $f$, (\ref{eq:2_10}), and get
\[
\sum_{k>N}d(k)^{2}\left(|e^{i\beta\nu(k)}-1|^{2}+|e^{i\beta(\nu(k)+1)}-1|^{2}\right)\le8\sum_{k>N}d(k)^{2}\stackrel{(\textrm{\ref{eq:2_6}})}{\le}8\varepsilon\varepsilon_{N}^{2}.
\]
Taking these two estimates together gives
\begin{align*}
\sum_{n<-\nu(N)}|\gamma(n)(e^{i\beta n}-1)|^{2} & =\sum_{n<-\nu(N)}|(c(n)+\widehat{f}(n))(e^{i\beta n}-1)|^{2}\le\\
 & \le\sum_{n<-\nu(N)}(2|c(n)|^{2}+2|\widehat{f}(n)|^{2})\cdot|e^{i\beta n}-1|^{2}\\
\mbox{By the above}\qquad & \le\varepsilon\varepsilon_{N}^{2}\cdot\left(2C+16\right).\qedhere
\end{align*}

\end{proof}
\noindent \textbf{Step 6}. We now proceed as in the proof of theorem
\ref{thm:PLA+Hw} i.e.\ we wish to apply lemma \ref{lem:main} for
some PLA function related to the null-series (\ref{eq:nullbeta}).
Fix some $N$ large and examine $e^{-i(\nu(N)-1)\beta}-1$ and $e^{-i\nu(N)\beta}-1$.
Since $\beta\in\left(\frac{2\pi}{3},\frac{4\pi}{3}\right)$, it is
not possible for both numbers to be small. We therefore examine two
cases: 
\begin{enumerate}
\item \label{enu:ni(N+1)}$|e^{-i\nu(N)\beta}-1|>(d(N)/N)\varepsilon(N/d(N))$.
\item \label{enu:ni(N)}$|e^{-i\nu(N)\beta}-1|\le(d(N)/N)\varepsilon(N/d(N))$.
This implies that $|e^{-i(\nu(N)-1)\beta}-1|>c$.
\end{enumerate}
Let us start with the first case (the other is similar but slightly
simpler). We shift the null-series (\ref{eq:nullbeta}) by $\nu(N)$
and get a new null-series whose positive part is the PLA expansion
of some PLA function, and whose negative part is its Fourier expansion.
Namely, define
\begin{align}
q(t): & =\frac{1}{d(N)|e^{-i\nu(N)\beta}-1|}\sum_{n>-\nu(N)}\gamma(n)(e^{i\beta n}-1)e^{i(n+\nu(N))t}\label{eq:qhat}\\
 & =1+\frac{1}{d(N)|e^{-i\nu(N)\beta}-1|}\sum_{n<-\nu(N)}\gamma(n)(e^{i\beta n}-1)e^{i(n+\nu(N))t}.\nonumber 
\end{align}
(the term ``1'' in the second line requires that $N$ be sufficiently
large because it requires that $\nu(N)\not\in Q$. But this follows
from our assumption (\ref{enu:ni(N+1)}) since if $\nu(N)=q(k)$then
$|e^{i\beta q(k)}-1|<C\varepsilon\varepsilon_{N}^{2}$ which contradicts
(\ref{enu:ni(N+1)}) for $N>C$). 

Now, by (\ref{eq:gammaneen}),
\[
\sum_{n<-\nu(N)}|\gamma(n)(e^{i\beta n}-1)|^{2}\le C\varepsilon\varepsilon_{N}^{2}.
\]
Hence 
\[
||q-1||_{2}\le\frac{C\varepsilon\varepsilon_{N}}{d(N)|e^{-i\nu(N)\beta}-1|}
\]
and since we assumed $|e^{-i\nu(N)\beta}-1|>(d(N)/N)\varepsilon(N/d(N))$
we get, 
\[
||q-1||_{2}\le\frac{CN\varepsilon\varepsilon_{N}}{d(N)^{2}\varepsilon(N/d(N))}.
\]
Recalling the definition of $\varepsilon\varepsilon_{n}$ (\ref{eq:defepseps}),
\[
||q-1||_{2}\le\frac{C}{N}\varepsilon\left(\frac{N^{2}}{d(N)^{2}}\varepsilon^{-1}\left(\frac{N}{d(N)}\right)\right)
\]
and if $N$ is sufficiently large the fraction is $<1$ and we can
apply lemma \ref{lem:main}. We get
\[
\left|\left\{ t:q^{*}(t)>\frac{N^{2}}{d(N)^{2}}\varepsilon^{-1}\left(\frac{N}{d(N)}\right)\right\} \right|>c_{1}.
\]
Recalling that the PLA expansion of $q$ is given by (\ref{eq:qhat})
we get that there is a set of measure $>c_{1}$ where 
\begin{multline}
\sup_{k}\left|\sum_{n=1-\nu(N)}^{k}\gamma(n)(e^{i\beta n}-1)e^{i(n+\nu(N))t}\right|>\\
>d(N)|e^{-i\nu(N)\beta}-1|\frac{N^{2}}{d(N)^{2}}\varepsilon^{-1}\left(\frac{N}{d(N)}\right)>N\label{eq:sumni(N)k}
\end{multline}
where the second inequality again uses our assumption (\ref{enu:ni(N+1)}).
We can replace in (\ref{eq:sumni(N)k}) the $e^{i(n+\nu(N))t}$ by
simply $e^{int}$ as it does not change the absolute value of the
expression. Finally to change the limit of the sum to 0 we note that
clearly 
\[
\sum_{n=1-\nu(N)}^{0}|c(n)(e^{i\beta n}-1)|^{2}\le C.
\]
This we may subtract from estimate (\ref{eq:sumni(N)k}) and get that
on a set of measure $>\frac{1}{2}c_{1}$, 
\begin{equation}
\sup_{k}\left|\sum_{n=0}^{k}\gamma(n)(e^{i\beta n}-1)e^{int}\right|>N-C\label{eq:endcaseI}
\end{equation}
and we are done with case (\ref{enu:ni(N+1)}).\medskip{}

\noindent \textbf{Step 7}. We are left with case (\ref{enu:ni(N)})
which is very similar, except that instead of shifting by $\nu(N)$
we shift by $\nu(N)-1$. The main reason to read this step is to see
why we needed to define $\varepsilon\varepsilon_{N}$ by iterating
$\varepsilon$ twice. As in case (\ref{enu:ni(N+1)}) for $N$ sufficiently
large we would have $\nu(N)-1\not\in Q$ so $\gamma(-\nu(N)+1)=d(N)$.
This gives, instead of (\ref{eq:qhat}),
\begin{align*}
q(t): & =\frac{1}{d(N)|e^{-i(\nu(N)-1)\beta}-1|}\sum_{n>1-\nu(N)}\gamma(n)(e^{i\beta n}-1)e^{i(n+\nu(N)-1)t}\\
 & =1+\frac{1}{d(N)|e^{-i(\nu(N)-1)\beta}-1|}\sum_{n<1-\nu(N)}\gamma(n)(e^{i\beta n}-1)e^{i(n+\nu(N)-1)t}.
\end{align*}
The argument that $||q-1||_{2}$ is small is similar. We have
\[
\sum_{n<1-\nu(N)}|\gamma(n)(e^{i\beta n}-1)|^{2}\le C\varepsilon\varepsilon_{N}^{2}+\frac{d(N)^{2}}{N^{2}}\varepsilon^{2}\left(\frac{N}{d(N)}\right)
\]
where the extra term is the one corresponding to $n=-\nu(N)$ and
is estimated by our assumption (\ref{enu:ni(N)}). The extra term
is the dominant one, so we may write
\[
\sum_{n<1-\nu(N)}|\gamma(n)(e^{i\beta n}-1)|^{2}\le C\frac{d(N)^{2}}{N^{2}}\varepsilon^{2}\left(\frac{N}{d(N)}\right).
\]
Since $|e^{-i(\nu(N)-1)\beta}-1|>c$ by our assumption, we get
\[
||q-1||_{2}\le\frac{1}{cd(N)}\cdot C\frac{d(N)}{N}\varepsilon\left(\frac{N}{d(N)}\right)
\]
so again for $N$ sufficiently large we may apply lemma \ref{lem:main}
and get
\[
\left|\left\{ t:q^{*}(t)>\frac{N}{d(N)}\right\} \right|>c_{1}
\]
the same argument as in the previous case then shows that on a set
of measure $>c_{1}$, 
\[
\sup_{k}\left|\sum_{n=2-\nu(N)}^{k}\gamma(n)(e^{i\beta n}-1)e^{int}\right|>cN
\]
and again on a set of measure $>\frac{1}{2}c_{1}$,
\begin{equation}
\sup_{k}\left|\sum_{n=0}^{k}\gamma(n)(e^{i\beta n}-1)e^{int}\right|>cN-C.\label{eq:endcaseII}
\end{equation}
As our conclusion (\ref{eq:endcaseI}) for case (\ref{enu:ni(N+1)})
is stronger, we in fact get that (\ref{eq:endcaseII}) holds regardless
of whether case (\ref{enu:ni(N+1)}) or case (\ref{enu:ni(N)}) held.

Since $N$ was arbitrary, we see that on a set of measure $>\frac{1}{2}c_{1}$
(the upper limit of the sets from (\ref{eq:endcaseII})), 
\[
\sup_{k}\left|\sum_{n=0}^{k}\gamma(n)(e^{i\beta n}-1)e^{int}\right|=\infty.
\]
In contradiction to our assumption after (\ref{eq:nullbeta}). Theorem
\ref{thm:PLA+UL} is thus proved.\qed

\end{document}